\theoremstyle{cupthm}
\newtheorem{theorem}{Theorem}[section]
\newtheorem{corollary}[theorem]{Corollary}
\newtheorem{lemma}[theorem]{Lemma}
\theoremstyle{cupdefn}
\theoremstyle{cuprem}
\numberwithin{equation}{section}
\newtheorem{question}[theorem]{Question}
\newtheorem{example}[theorem]{Example}
\newcommand{\HH}{\mathcal{H}}
\newcommand{\R}{\mathbb{R}}
\newcommand{\be}[1]{\begin{equation}\label{#1}}
\newcommand{\ee}{\end{equation}}
\begin{document}
\baselineskip5.6mm

\title{Lipschitz retraction of finite subsets of Hilbert spaces}

\author{Leonid V. Kovalev}
\address{215 Carnegie, Mathematics Department, Syracuse University, Syracuse, NY 13244-1150}
\email{lvkovale@syr.edu}
\thanks{Supported by the National Science Foundation grant DMS-1362453.}

\subjclass[2010]{Primary 54E40; Secondary 54B20, 54C15, 54C25}

\begin{abstract}
Finite subset spaces of a metric space \(X\) form a nested sequence under natural isometric embeddings \(X=X(1)\subset X(2)\subset\dots\). 
We prove that this sequence admits Lipschitz retractions \(X(n)\to X(n-1)\) when \(X\) is a Hilbert space. 
\end{abstract}

\maketitle

\section{Introduction}

Let $X$ be a metric space. For $n\ge 1$, $X(n)$ denotes the set of all nonempty subsets of $X$ with cardinality at most $n$. Equipped with the Hausdorff metric, $X(n)$ becomes a metric space which is sometimes called
a symmetric product or symmetric power of $X$. Following Tuffley~\cite{Tu1} we use the descriptive term 
\emph{finite subset space} for $X(n)$. This space is related to, but different from $X^n/S_n$, the space
of unordered $n$-tuples of points in $X$. 

One feature that distinguishes $X(n)$ from the Cartesian power $X^n$ and the quotient $X^n/S_n$ is the existence of canonical isometric embeddings $X(n)\subset X(n+1)$. The embeddings $X^n\subset X^{n+1}$ are not canonical: they depend on the choice of a base point in $X$. Furthermore, the geometry of embedding 
$X(n)\subset X(n+1)$ is far richer than the geometry of $X^n\subset X^{n+1}$. For example~\cite{Mo}, $S^1(1)\subset S^1(3)$ is a trefoil knot embedded in $S^3$ which bounds a M\"obuis band, namely $S^1(2)$. 
This example shows that in general the canonical embeddings $\iota\colon X(n)\to X(n+1)$ do not split: 
there need not be a continuous map $r\colon X(n+1)\to X(n)$ such that $r\circ \iota=\mathrm{id}$. 

On the other hand, there is a Lipschitz retraction of $\mathbb R(n+1)$ onto $\mathbb R(n)$ for every $n\ge 1$.
This observation, made in~\cite{Ko}, was used to show the bi-Lipschitz embeddability of $\mathbb R(n)$ into a Euclidean space. Our main result shows that such a Lipschitz retraction exists for all Hilbert spaces, either finite-dimensional or infinite-dimensional. In this context, there is no loss of generality in assuming the vector spaces are real. 

\begin{theorem}\label{thm1} 
Let $\HH$ be a Hilbert space.  Then for every $n\ge 2$ there exists a Lipschitz retraction $r_n\colon \HH(n )\to \HH(n-1)$.  
\end{theorem}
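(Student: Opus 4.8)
My plan is to construct $r_n$ explicitly as a ``collapse of the tightest cluster'' and then prove the Hausdorff--Lipschitz estimate by reducing to finite dimensions. Since the embedding $\HH(n-1)\subset\HH(n)$ is set inclusion, $r_n$ must fix every set of at most $n-1$ points and send a genuine $n$-point set $S=\{x_1,\dots,x_n\}$ into $\HH(n-1)$, i.e.\ it must merge at least two points. The guiding model is the case $n=2$: map $\{x,y\}$ to its midpoint $\tfrac12(x+y)$. This fixes singletons, and it is $1$-Lipschitz. Indeed, if two-point sets $A,B$ satisfy $d_H(A,B)=\rho$, then every point of each lies within $\rho$ of the other; for the resulting balanced bipartite ``closeness'' graph on two-plus-two vertices this forces a perfect matching with both edges of length $\le\rho$, and summing the two matched differences and using the triangle inequality bounds the distance of the midpoints by $\rho$. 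I would take this as the base case and, more importantly, as the local model: wherever a single pair of points of $S$ is \emph{strictly} closest, the prescription ``merge that pair to its midpoint and keep the others fixed'' is the correct, locally $1$-Lipschitz one.

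The obstacle is that the naive global rule ``merge the strictly closest pair'' is discontinuous. On the line, the configurations $\{0,1,2+\varepsilon\}$ and $\{0,1,2-\varepsilon\}$ collapse to $\{\tfrac12,2\}$ and $\{0,\tfrac32\}$, whose Hausdorff distance stays bounded away from $0$ as $\varepsilon\to0^+$; the jump comes from holding the far point fixed while an ambiguous pair is chosen. The fix I would pursue is to collapse the whole configuration toward its coincidences rather than merging one pair in isolation, so that whenever several pairs compete to be closest the competing points are already bunched together and the rival outcomes agree in the limit. Concretely, I would either run an attractive contraction in which each $x_i$ is displaced toward the others by an amount that grows as its gaps shrink, recording the configuration at the first coincidence, or write an explicit weighted-collapse formula with weights that blow up as pairwise gaps tend to $0$. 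Either realization fixes any input that already has a coincidence, which yields the retraction property, and reduces to the midpoint model when one pair is isolated and strictly closest.

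The crux is the global Lipschitz bound for $d_H$, uniform over the stratified singular set of $\HH(n)$ (the various coincidence patterns) and independent of $\dim\HH$. Here I would exploit the Hilbert structure: optimal matchings linearize $d_H$ near each stratum exactly as in the base case, while the parallelogram law together with orthogonal projection onto the affine span of the active points reduces each estimate to a finite-dimensional, and ultimately scalar, comparison between gaps and displacements. Since every finite set lies in a finite-dimensional subspace, the construction is finite-dimensional pointwise, and a dimension-free Lipschitz constant (obtainable through these Hilbert-space estimates, with Kirszbraun's theorem available to extend Lipschitz data without increasing the constant) lets the infinite-dimensional statement follow from the finite-dimensional one. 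I expect the hard part to be verifying that the collapse stays Lipschitz across the transition strata where the number of competing closest pairs changes: this is precisely where the bunching must be shown to force the one-sided limits to coincide quantitatively, and it is the estimate I anticipate will occupy the bulk of the argument.
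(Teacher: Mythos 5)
You have correctly diagnosed the obstruction (the naive ``merge the strictly closest pair'' rule is discontinuous), and your first proposed realization --- an attractive flow acting on the whole configuration, stopped at the first coincidence --- is in outline the construction the paper actually uses. But the proposal has a genuine gap: the map is never pinned down, and the tools you nominate for the crux (optimal matchings stratum by stratum, the parallelogram law, projection to affine spans, Kirszbraun) are not the ones that close it. A stratum-by-stratum analysis of the transition loci where the set of competing closest pairs changes is precisely the hard road, and nothing in the proposal indicates how to carry out those matching estimates uniformly.

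The missing idea is to take the flow to be gradient descent for the \emph{convex} function $\Phi(x_1,\dots,x_n)=\sum_{i<j}\|x_i-x_j\|$ on $\HH^n$, that is $du_i/dt=\sum_{j\ne i}(u_j-u_i)/\|u_j-u_i\|$, run until the first collision time $T(x)$. Monotonicity of the gradient of a convex function makes this flow nonexpansive in the $\ell^2$ metric on $\HH^n$: the quantity $\sum_i\|u_i(t)-v_i(t)\|^2$ is nonincreasing. That single observation gives the Lipschitz bound across all coincidence strata at once, with a dimension-free constant, so no stratification and no Kirszbraun are needed. Two further quantitative inputs, absent from your proposal, are still required because the two flows have different lifespans: (i) the collision time is comparable to the minimal gap, $\delta(x)/(2(n-1))\le T(x)\le\delta(x)/2$ with $\delta(x)=\min_{i<j}\|x_i-x_j\|$ (the upper bound again via monotonicity of $v\mapsto v/\|v\|$ applied to the closest pair), whence $d_H(r(x),x)\le\tfrac{n-1}{2}\,\delta(x)$; and (ii) a case split on whether $\delta(x)+\delta(y)\le 4\,d_H(x,y)$ --- if so, both configurations barely move and the triangle inequality suffices; if not, the points of $x$ and $y$ match up one-to-one, and one compares $r(x)$ with the $y$-flow at time $T(x)$ and then flows the small remaining configuration. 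Note also that your displacement heuristic is backwards: in the working construction each point moves with speed at most $n-1$ and total displacement of order $\delta(x)$, which \emph{shrinks} as the gaps shrink --- that is exactly what makes $r$ continuous at $\HH(n-1)$.
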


By Remark 4.5 in~\cite{Ko}, combining the case $X=\mathbb R^d$ of Theorem~\ref{thm1} with the results of~\cite{Ko} yields the following corollary.

\begin{corollary}\label{embedcor} For $d,n\ge 1$, the space $\mathbb R^d(n)$ is an absolute  Lipschitz retract. 
\end{corollary}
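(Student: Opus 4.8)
The plan is to derive the corollary from the standard theory of absolute Lipschitz retracts (ALRs), feeding the case $X=\mathbb{R}^d$ of Theorem~\ref{thm1} into the reduction recorded in Remark~4.5 of~\cite{Ko}. I would use three standard facts: (i) a finite-dimensional normed space, in particular Euclidean $\mathbb{R}^N$, is an ALR; (ii) the ALR property is a bi-Lipschitz invariant; and (iii) for $Y\subseteq Z$ with $Z$ an ALR, $Y$ is an ALR if and only if $Y$ is a Lipschitz retract of $Z$ (the ``only if'' being the definition applied to the inclusion $Y\subseteq Z$, and the ``if'' following from the extension-property characterization of ALRs, since composing an extension into $Z$ with the retraction onto $Y$ produces an extension into $Y$). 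Applying (iii) with $Z=\mathbb{R}^N$ reduces the problem to exhibiting a Lipschitz retraction of a Euclidean space onto a bi-Lipschitz copy of $\mathbb{R}^d(n)$.

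The first ingredient I import from~\cite{Ko} is a bi-Lipschitz embedding $\Phi\colon\mathbb{R}^d(n)\to\mathbb{R}^N$ for some $N=N(d,n)$. Writing $Y_k=\Phi(\mathbb{R}^d(k))$, the canonical inclusions yield a filtration $Y_1\subset Y_2\subset\dots\subset Y_n\subset\mathbb{R}^N$, and by Fact~(ii) it suffices to show each $Y_k$ is an ALR; I would argue this by induction on $k$. The base case $k=1$ is immediate, since $Y_1$ is bi-Lipschitz equivalent to $\mathbb{R}^d$, an ALR by~(i). For the inductive step, assume $\mathbb{R}^d(n-1)$, equivalently $Y_{n-1}$, is an ALR; then Fact~(iii) furnishes a Lipschitz retraction $\sigma\colon\mathbb{R}^N\to Y_{n-1}$.

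Here Theorem~\ref{thm1} enters: the retraction $r_n\colon\mathbb{R}^d(n)\to\mathbb{R}^d(n-1)$ transports under $\Phi$ to a Lipschitz retraction $R_n=\Phi\circ r_n\circ\Phi^{-1}\colon Y_n\to Y_{n-1}$ that collapses the ``top stratum'' of configurations of exactly $n$ points onto the singular set $Y_{n-1}$. This is precisely the data from which Remark~4.5 of~\cite{Ko} assembles a global Lipschitz retraction $\rho\colon\mathbb{R}^N\to Y_n$: the ambient retraction $\sigma$ governs the behavior near $Y_{n-1}$, a local nearest-point projection handles the smooth top stratum $Y_n\setminus Y_{n-1}$, and $R_n$ mediates on the collar where the $n$th point degenerates, so that the pieces fit together with uniform Lipschitz control. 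Granting $\rho$, Fact~(iii) shows $Y_n$ is an ALR, and Fact~(ii) then gives that $\mathbb{R}^d(n)$ is an ALR, closing the induction and establishing Corollary~\ref{embedcor}.

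I expect the genuine obstacle to be exactly this last gluing, namely bounding the Lipschitz constant of the combined retraction across the collar of $Y_{n-1}$, where the Hausdorff geometry is most degenerate and a naive patching of $\sigma$ with a local projection fails to be Lipschitz. This is the step for which the \emph{retraction} supplied by Theorem~\ref{thm1}, as opposed to mere homotopy-theoretic data, is indispensable, and it is precisely the content that Remark~4.5 of~\cite{Ko} provides.
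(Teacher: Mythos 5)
Your proposal is correct and takes essentially the same route as the paper: the paper proves the corollary purely by citing Remark~4.5 of~\cite{Ko} together with the case $X=\mathbb{R}^d$ of Theorem~\ref{thm1}, and your facts (i)--(iii), the bi-Lipschitz embedding into $\mathbb{R}^N$, and the induction on $n$ are precisely an unpacking of what that remark supplies. Your conjectural description of how the gluing across the collar of $Y_{n-1}$ is carried out is not verified here, but since you, like the paper, explicitly defer that step to Remark~4.5 of~\cite{Ko}, nothing further is required.
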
 

Since the existence of Lipschitz retractions $r\colon X(n)\to X(n-1)$  is a bi-Lipschitz invariant of $X$, Theorem~\ref{thm1} applies also to Banach spaces that are isomorphic to a Hilbert space. However, it remains unclear whether such retractions exist for general Banach spaces, beyond the trivial case $X(2)\to X(1)$ given by the midpoint map $\{a,b\}\mapsto \{(a+b)/2\}$. 

Of particular interest here is the case $X=\ell^\infty$, because $\ell^\infty$ is an absolute $1$-Lipschitz retract, i.e., admits a $1$-Lipschitz retraction from any larger metric space containing it. Indeed, it remains unknown whether the property of being an absolute Lipschitz retract is inherited by finite subset spaces in general. See~\cite{BU} for the topological version of this problem, and ~\cite{Go}, \cite{AIMPL} for the Lipschitz version. 

Another setting to which Theorem~\ref{thm1} could be conceivably extended is CAT(0) metric spaces. The existence of $1$-Lipschitz retraction $X(2)\to X(1)$ for such spaces is a well-known consequence of the convexity of the metric in CAT(0) spaces (e.g.,~\cite{BH}): the map sending each pair of points to the midpoint of the geodesic connecting them provides such a retraction.  

\section{Proof of Theorem~\ref{thm1}}

Let $\HH^n$ be the Cartesian power of $\HH$, equipped with the metric 
\[
d((x_1,\dots,x_n),(y_1,\dots,y_n))=\left(\sum_{k=1}^n \|x_k-y_k\|^2 \right)^{1/2} 
\]
The product $\HH^n$  is also a Hilbert space. 
Define a function $\Phi \colon \HH^n\to \mathbb R$ by 
\begin{equation*}
\Phi(x_1,\dots,x_n)= \sum_{1\le i<j\le n} \|x_i-x_j\|
\end{equation*}
It is easy to see that $\Phi$ is a convex function on $\HH^n$. 
Let $D=\{x\in \HH^n \colon x_i=x_j \text{ for some } i\ne j\}$. The function $\Phi$ is Fr\'echet differentiable at every point of $\HH^n\setminus D$, with the derivative 
\begin{equation}\label{e2grad}
\nabla \Phi(x) =\left( \sum_{j\ne i}\frac{x_i-x_j}{\|x_j-x_i\|} \right)_{i=1}^n
\end{equation}
By~\eqref{e2grad}, $\Phi$ satisfies the upper gradient bound 
\begin{equation}\label{uppergrad}
\|\nabla \Phi\| \le (n-1)\sqrt{n}, \quad x\in \HH^n\setminus D
\end{equation}

Given a set in $\HH(n)\setminus \HH(n-1)$, enumerate its elements as $\{x_1,\dots,x_n\}$ (in arbitrary
order), thus associating to it a point  $x\in \HH^n\setminus D$. Since $x$ uniquely identifies the set $\{x_i\}$, we sometimes write $x$ instead of $\{x_i\}$ to simplify notation.  

Consider the ODE system
\be{ode}
\frac{du_i}{dt} = \sum_{j\ne i}\frac{u_j-u_i}{\|u_j-u_i\|},\quad i=1,\dots,n
\ee
with the initial conditions $u_i(0)=x_i$. In view of~\eqref{e2grad}, the system~\eqref{ode} can be seen as the gradient flow of the function $\Phi$. Note that the right hand side of~\eqref{ode} belongs to the finite-dimensional subspace spanned by $x_1,\dots,x_n$. Hence, the solution remains in this subspace as long as it exists. By the   Picard existence and uniqueness theorem, there is a unique solution  until $u$ reaches the set $D$.

Let $[0,T(x))$ be the maximal interval of existence of solution of~\eqref{ode}.  Denote $\delta(x) = \min_{i<j}\|x_i-x_j\|$. Since 
\be{lower}
\left\|\frac{du_i}{dt}\right\|  \le n-1 \quad \text{for all }i
\ee
it follows that 
\[
T(x)\ge \frac{\delta(x)}{2(n-1)}
\]

The following inequality provides an estimate for $T(x)$ in the reverse direction; it turns out that $T(x)$ is comparable to $\delta(x)$. 

\be{quick} 
T(x) \le \frac{\delta(x)}{2} 
\ee

\begin{proof}[Proof of~\eqref{quick}]
A map $F\colon\HH\to\HH$ is called monotone if 
\[\langle F(a)-F(b),a-b\rangle \ge 0 \quad \text{ for all } a,b\in\R^d\] 
It is a well-known fact~\cite{Ro}*{\S 24} that the gradient of any convex function is monotone. In particular, $F(x)=x/\|x\|$ is a monotone map, being the gradient of convex function $x\mapsto \|x\|$. 

Renumbering the points $x_i$, we may assume $\|x_1-x_2\|=\delta(x)$. Consider the function $\varphi(t)=\|u_1(t)-u_2(t)\|$, $0<t<t_c$. Differentiation yields 
\[\varphi'(t)=\|u_1-u_2\|^{-1}\left \langle \frac{du_1}{dt}-\frac{du_2}{dt} , u_1-u_2\right \rangle\] 
The inner product on the right consists of the term 
\[
\langle F(u_2-u_1)-F(u_1-u_2), u_1-u_2\rangle = -2\|u_1-u_2\|
\]
and the sum over $j=3,\dots,n$ of  
\[\begin{split} &\langle F(u_j-u_1)-F(u_j-u_2), u_1-u_2\rangle \\
&= - \langle F(u_j-u_1)-F(u_j-u_2), (u_j-u_1)-(u_j-u_2)\rangle \\ & \le 0. 
\end{split} \]
Thus, $\varphi'(t) \le -2$ for $0<t<T(x)$, and since $\varphi(t)\ge 0$ by definition, it follows that $T(x)\le \varphi(0)/2 = \delta(x)/2$. 
\end{proof}

We are now ready to define the retraction $r\colon X(n)\to X(n-1)$. On the subset $X(n-1)\subset X(n)$ it is the identity map. For a set $x=\{x_1,\dots,x_n\}\in X(n)\setminus X(n-1)$   
let $r(\{x_i\})=\{u_i(T(x))\}$. This is well-defined because a different enumeration of the elements 
$\{x_1,\dots,x_n\}$ would only result in a different enumeration of the elements $\{u_i(T(x))\}$. 

It remains to prove that $r$ is a Lipschitz 
retraction of $\HH(n)$ onto $\HH(n-1)$ in the Hausdorff metric $d_H$. Specifically,
\be{Lipgoal}
d_H(r(x),r(y))\le \max(n^{3/2},2n-1) \,d_H(x,y)
\ee
for all $x,y\in\HH(n)$. 

\begin{proof}[Proof of ~\eqref{Lipgoal}]
Let $(u_i)$ and $(v_i)$ be the solutions of~\eqref{ode}  with initial data $(x_i)$ and $(y_i)$, respectively.  

Combining ~\eqref{quick} and~\eqref{lower}, we obtain that 
\be{case1}
d_H(r(x),x) \le \frac{n-1}{2} \delta(x)
\ee
and similarly for $y$. 

\begin{lemma}\label{stable}
 $\sum_{i=1}^n \|u_i(t)-v_i(t)\|^2$ is a  nonincreasing function of $t$ for 
 $0<t< \min(T(x),T(y))$. 
\end{lemma}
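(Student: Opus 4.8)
The plan is to show that the squared distance $\psi(t)=\sum_{i=1}^n\|u_i(t)-v_i(t)\|^2$ has nonpositive derivative. Differentiating under the sum, I would compute
\[
\psi'(t)=2\sum_{i=1}^n\left\langle \frac{du_i}{dt}-\frac{dv_i}{dt},\,u_i-v_i\right\rangle,
\]
and then substitute the right-hand side of the ODE~\eqref{ode} for both $du_i/dt$ and $dv_i/dt$. The goal is to prove that this entire expression is $\le 0$. The natural interpretation is that the ODE is a gradient flow for the convex function $\Phi$, and the squared distance between two trajectories of a gradient flow of a convex function is always nonincreasing; the task is to verify this directly from the monotonicity already exploited in the proof of~\eqref{quick}.

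The key step is to reorganize $\psi'(t)$ as a sum over unordered pairs $\{i,j\}$. Writing $F(w)=w/\|w\|$ as before, the contribution to $\psi'(t)/2$ is
\[
\sum_{i=1}^n\left\langle \sum_{j\ne i}\bigl(F(u_j-u_i)-F(v_j-v_i)\bigr),\,u_i-v_i\right\rangle.
\]
I would swap the order of summation and collect, for each unordered pair $\{i,j\}$, the two terms coming from index $i$ (with its $j$) and from index $j$ (with its $i$). Using the antisymmetry $F(u_i-u_j)=-F(u_j-u_i)$, the combined contribution of the pair $\{i,j\}$ becomes
\[
-\bigl\langle F(u_i-u_j)-F(v_i-v_j),\,(u_i-v_i)-(u_j-v_j)\bigr\rangle
=-\bigl\langle F(a)-F(b),\,a-b\bigr\rangle,
\]
where $a=u_i-u_j$ and $b=v_i-v_j$. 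Since $F$ is monotone, each such bracket is $\ge 0$, so every pairwise contribution is $\le 0$, giving $\psi'(t)\le 0$.

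The main obstacle is bookkeeping rather than any real analytic difficulty: one must carefully regroup the double sum so that the differences $u_i-v_i$ attached to each index combine into the single difference vector $a-b=(u_i-u_j)-(v_i-v_j)$ that monotonicity of $F$ requires. A secondary point worth noting is differentiability: on the interval $0<t<\min(T(x),T(y))$ both solutions stay away from $D$, so $F$ is evaluated only at nonzero arguments and $\psi$ is genuinely differentiable, making the termwise differentiation legitimate. Once the pairwise decomposition is in place, the conclusion follows immediately from the monotonicity of $F(w)=w/\|w\|$ established earlier.
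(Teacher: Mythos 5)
Your proof is correct and follows essentially the same route as the paper: the paper simply observes that the flow is the (negative) gradient flow of the convex function $\Phi$ and invokes monotonicity of $\nabla\Phi$ to get $\langle \frac{du}{dt}-\frac{dv}{dt},u-v\rangle\le 0$ in one line. Your pairwise regrouping is just an explicit verification of that same monotonicity, reducing it to the monotonicity of $F(w)=w/\|w\|$ already used in the proof of~\eqref{quick}; the bookkeeping you describe is carried out correctly.
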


\begin{proof} The point $(u_1(t),\dots,u_n(t))\in \HH^n $ evolves under the gradient flow of the convex function $\Phi(u_1,\dots,u_n) = \sum_{i<j}\|u_i-u_j\|$. Since the gradient of a convex function is monotone, we have  
\[\left\langle \frac{du}{dt}-\frac{dv}{dt}, u-v \right \rangle\le 0\] 
The left hand side is $1/2$ of the derivative of $\|u(t)-v(t)\|^2$ with respect to $t$, which proves the claim.
 \end{proof} 

As a consequence of Lemma~\ref{stable},
\be{stableH}
d_H(\{u_i\},\{v_i\}) \le \sqrt{n} \max_{i}\|x_i-y_i\|   
\ee
for all $0<t<\min(T(x),T(y))$. Let $\rho=d_H(x,y)$. 

\textbf{Case 1}: $\delta(x)+\delta(y)\le 4\,\rho$. Then from~\eqref{case1} 
we get 
\[ 
d_H(r(x),r(y)) \le \rho+ d_H(r(x),x)+d_H(r(y),y)  \le \rho + 2(n-1)\rho
\]
which implies~\eqref{Lipgoal}. 

\textbf{Case 2}: $\delta(x)+\delta(y)>  4\rho$. We may assume
$\delta(x)> 2\rho$. Since the function $\delta$ is $2$-Lipschitz in the Hausdorff metric, it follows that $\delta(y)>0$. 

The geometric meaning of $\delta(x)> 2\rho$ is that the points $x_i$ are separated by more than $2\rho$, yet each of them is within $\rho$ of some point $y_j$. Therefore, we can enumerate the points $x_i$ and $y_i$ in such a way that 
 \be{close}
 \|x_i-y_i\|\le \rho \quad\text{ for } \ i=1,\dots,n
 \ee
From now on we use only~\eqref{close}, in which the roles of $x$ and $y$  can be 
interchanged. Thus, we may assume that $T(x)\le T(y)$. 

By definition, $r(x) = \{u_i(T(x))\}$. Let $z =  \{v_i(T(x))\}$. By~\eqref{stableH} we have 
\be{goal1}
d_H(r(x),z)\le \sqrt{n}\,\rho 
\ee 
Since $\delta$ is $2$-Lipschitz and $\delta(r(x))=0$, it follows that 
\[\delta(z)\le 2d_H(r(x),z)\le 2\sqrt{n}\,\rho\] 
The estimate~\eqref{case1} yields 
\be{goal2}
d_H(r(z),z)\le (n-1)\sqrt{n}\,\rho
\ee
Now~\eqref{Lipgoal} follows from~\eqref{goal1} and~\eqref{goal2}: 
\[
d_H(r(x),r(y))\le d_H(r(x),z)+d_H(r(z),z)  \le n^{3/2}\rho
\qedhere \] 
\end{proof}

\section{An example and open questions}
 
Since the midpoint map $\HH(2)\to\HH(1)$ is Lipschitz with constant $1$, it is natural to ask whether a $1$-Lipschitz retraction of $\HH(n)$ onto $\HH(n-1)$ exists  for $n\ge 3$. The following example, given by the referee of an earlier version of this paper, shows that the answer is negative already for  $n=3$. 

\begin{example} There is no $1$-Lipschitz retraction from $\R^2(3)$ onto $\R^2(2)$. 
\end{example}

\begin{proof}
Let $A=\{(0,0),(1,0),(1/2,\sqrt{3}/2)\}$ be the set of vertices of an equilateral triangle of sidelength $1$ in the plane $\R^2$. Also let $B = \{(-1,0),(0,0)\}$ and $C=\{(1,0),(2,0)\}$; these sets lie on the line extending the base of the triangle. Then 
$
d_H(A,B)=d_H(A,C)=1
$
and $d_H(B,C)=2$. If there was a $1$-Lipschitz retraction of $\R^2(3)$ onto $\R^2(2)$, the image of $A$ would be some set $E\in \R^2(2)$ such that 
$d_H(E,B)\le 1$ and $d_H(E,C)\le 1$. The only such set is $\{(0,0),(1,0)\}$, formed by the vertices of the base of the triangle $A$. However, the above argument   also  applies to two other sides of $A$, which yields  a contradiction. 
\end{proof}

\begin{question}\label{q1}
Do there exist   retractions $\HH(n)\to  \HH(n-1)$ with the Lipschitz constants bounded independently of $n$? 
\end{question}

In conclusion we state  the questions mentioned in the introduction. 

\begin{question} \label{q3}
If $X$ is a CAT(0) metric space, do there exist Lipschitz retractions $X(n)\to  X(n-1)$ for every $n\ge 2$? 
\end{question}

\begin{question} \label{q2}
If $X$ is a Banach space, do there exist Lipschitz retractions $X(n)\to  X(n-1)$ for every $n\ge 2$? 
\end{question}

Although the linear span of every $n$-subset of a Banach space $X$ can be given an equivalent inner product metric (thus allowing for a Lipschitz retraction within this subspace), the retraction depends on the choice of renorming. Thus, it seems that Theorem~\ref{thm1} cannot be used to answer Question~\ref{q2}.

\begin{bibdiv}
\begin{biblist}

\bib{BU}{article}{
   author={Borsuk, Karol},
   author={Ulam, Stanislaw},
   title={On symmetric products of topological spaces},
   journal={Bull. Amer. Math. Soc.},
   volume={37},
   date={1931},
   number={12},
   pages={875--882},
   issn={0002-9904},
}

\bib{BH}{book}{
   author={Bridson, Martin R.},
   author={Haefliger, Andr{\'e}},
   title={Metric spaces of non-positive curvature},
   series={Grundlehren der Mathematischen Wissenschaften [Fundamental
   Principles of Mathematical Sciences]},
   volume={319},
   publisher={Springer-Verlag, Berlin},
   date={1999},
   pages={xxii+643},
   isbn={3-540-64324-9},
}

\bib{AIMPL}{collection}{
   title={AimPL: Mapping theory in metric spaces},
   editor={Capogna, Luca},
   editor={Tyson, Jeremy},
   editor={Wenger, Stefan},
   publisher={American Institute of Mathematics},
   place={Palo Alto},
   date={2012},
   status={available from \url{http://aimpl.org/mappingmetric}},
}

 \bib{Go}{article}{
   author={Goblet, Jordan},
   title={Lipschitz extension of multiple Banach-valued functions in the
   sense of Almgren},
   journal={Houston J. Math.},
   volume={35},
   date={2009},
   number={1},
   pages={223--231},
   issn={0362-1588},
}

\bib{Ko}{article}{
   author={Kovalev, Leonid V.},
   title={Symmetric products of the line: embeddings and retractions},
   journal={Proc. Amer. Math. Soc.},
   volume={143},
   date={2015},
   number={2},
   pages={801--809},
   issn={0002-9939},
}

\bib{Mo}{article}{
   author={Mostovoy, Jacob},
   title={Lattices in $\Bbb C$ and finite subsets of a circle},
   journal={Amer. Math. Monthly},
   volume={111},
   date={2004},
   number={4},
   pages={357--360},
   issn={0002-9890},
}

\bib{Ro}{book}{
   author={Rockafellar, R. Tyrrell},
   title={Convex analysis},
   series={Princeton Landmarks in Mathematics},
   publisher={Princeton University Press, Princeton, NJ},
   date={1997},
   pages={xviii+451},
   isbn={0-691-01586-4},,
} 
 
\bib{Tu1}{article}{
   author={Tuffley, Christopher},
   title={Finite subset spaces of $S^1$},
   journal={Algebr. Geom. Topol.},
   volume={2},
   date={2002},
   pages={1119--1145},
   issn={1472-2747},
}

\end{biblist}
\end{bibdiv}

\end{document}